\documentclass[reqno,10pt]{amsart}

\usepackage{amsmath}
\usepackage{amssymb}
\usepackage{amsthm}
\usepackage[alphabetic]{amsrefs}
\usepackage[foot]{amsaddr}
\usepackage{mathrsfs}
\usepackage{geometry}
\usepackage{graphicx}
\usepackage{color}
\usepackage{graphics}
\usepackage[latin1]{inputenc}
\usepackage{verbatim}
\usepackage{tikz}
\usepackage{multirow}
\usepackage{expdlist}
\usepackage{subfigure}

\numberwithin{equation}{section}

\newtheorem{theorem}{Theorem}[section]

\newtheorem{proposition}[theorem]{Proposition}
\newtheorem{definition}[theorem]{Definition}
\newtheorem{corollary}[theorem]{Corollary}
\newtheorem{remark}[theorem]{Remark}
\newtheorem{example}[theorem]{Example}

\newcommand{\la}{\lambda}

\newcommand{\RL}{\operatorname{RL}}
\newcommand{\id}{\operatorname{id}}
\newcommand{\RLk}[1]{\RL_{#1}\text{--}\min}
\newcommand{\Sn}{\mathcal{S}}
\newcommand{\stirlingone}[2]{\genfrac{[}{]}{0pt}{}{#1}{#2}}
\newcommand{\evalat}[1]{\,\bigr\rvert_{#1}}
\def\wrt{\mbox{w.r.t. }}
\def\ie{\mbox{i.e. }}
\providecommand{\abs}[1]{\left\lvert#1\right\rvert}

\begin{document}

\author{Lukas Riegler \and Christoph Neumann}
\title[Playing JDT on d-complete posets]{Playing jeu de taquin on d-complete posets}
\keywords{Jeu de taquin, d-complete poset, double-tailed diamond poset, linear extensions, uniform distribution}
\thanks{E-Mail: lukas.riegler@univie.ac.at. Supported by the Austrian Science Foundation FWF, START grant Y463.}
\thanks{E-Mail: christoph.neumann@univie.ac.at. Supported by the Austrian Science Foundation FWF, Wittgenstein grant Z130-N13.}
\address{Fakultät für Mathematik, Universität Wien, Oskar-Morgenstern-Platz 1, 1090 Wien, Austria}

\begin{abstract}
Using a modified version of jeu de taquin, Novelli, Pak and Stoyanovskii gave a bijective proof of the hook-length formula for counting standard Young tableaux of fixed shape. In this paper we consider a natural extension of jeu de taquin to arbitrary posets. Given a poset $P$, jeu de taquin defines a map from the set of bijective labelings of the poset elements with $\{1,2,\ldots,\abs{P}\}$ to the set of linear extensions of the poset. One question of particular interest is for which posets this map yields each linear extension equally often. We analyze the double-tailed diamond poset $D_{m,n}$ and show that uniform distribution is obtained if and only if $D_{m,n}$ is d-complete. Furthermore, we observe that the extended hook-length formula for counting linear extensions on d-complete posets provides a combinatorial answer to a seemingly unrelated question, namely: Given a uniformly random standard Young tableau of fixed shape, what is the expected value of the left-most entry in the second row?
\end{abstract}

\maketitle

\section{Introduction}
\label{sec:Introduction}

\subsection{Jeu de taquin on posets}
Jeu de taquin (literally translated 'teasing game') is a board game (also known as $15$-puzzle) where
fifteen square tiles numbered with $\{1,2,\ldots,15\}$ are arranged inside a $4
\times 4$ square. The goal of the game is to sort the tiles by consecutively
sliding a square into the empty spot (see Figure \ref{fig:jdt-boardgame}). In
combinatorics the concept of jeu de taquin was originally introduced by
Schützenberger \cite{SchuetzenbergerJDT} on skew standard Young tableaux. Two related operations called promotion and evacuation, which act bijectively on the set of linear extensions of a poset, were also defined by Schützenberger \cite{Schuetzenberger72Promotion}.
\begin{figure}[ht]
\centering
\includegraphics[width=0.15\textwidth]{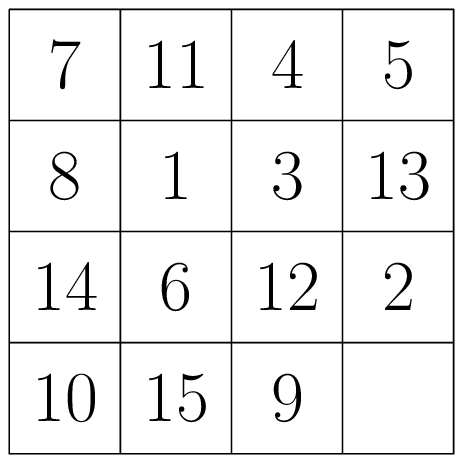}
\quad
\includegraphics[width=0.15\textwidth]{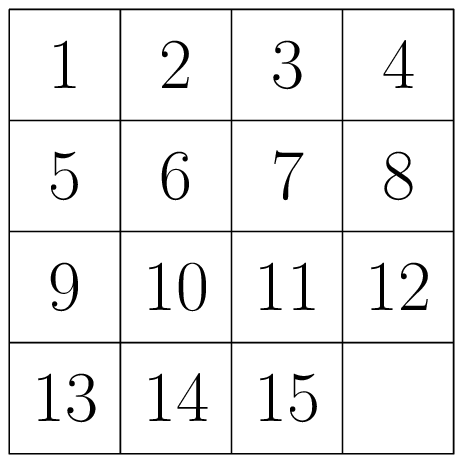}
\caption{An initial and final configuration of the board game jeu de taquin.
\label{fig:jdt-boardgame}}
\end{figure}
A modified version of jeu de taquin \cite{NPSHookBijection} has an obvious extension to arbitrary posets, which we describe first: The goal
of jeu de taquin on an $n$-element poset $P$ is to transform any (bijective)
labeling of the poset elements with $[n]:=\{1,2,\ldots,n\}$ into a dual linear extension, \ie a labeling $\iota$ such that $\iota(x) > \iota(y)$ whenever $x < y$ in the poset. For this, we first fix a linear extension $\sigma:P\rightarrow [n]$ of the poset, which defines the order in which the labels are sorted (see Figure \ref{fig:poset-labeling}).
\begin{figure}[ht]
\begin{center}
\includegraphics[width=4cm]{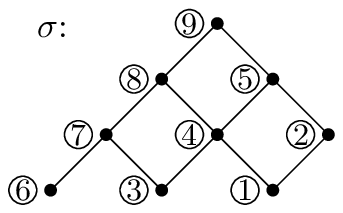}
\caption{Hasse diagram of a poset and a linear extension $\sigma$.\label{fig:poset-labeling}}
\end{center}
\end{figure}
The sorting procedure consists of $n$ rounds where after the first $i$ rounds
the poset elements $\{\sigma^{-1}(1),\sigma^{-1}(2),\ldots,\sigma^{-1}(i)\}$
have dually ordered labels. To achieve this, we compare in round $i$ the
current label of $x:=\sigma^{-1}(i)$ with the labels of all poset elements covered by $x$. If the current label is the smallest of them, we are done with round $i$. Else, let $y$ be the poset element with the smallest label. Swap the labels of $x$ and $y$ and repeat with the new label of $y$. An example can be seen in Figure \ref{fig:jdt-alg}. The fact that $\sigma$ is a linear extension together with the minimality condition in the sorting procedure ensures that after $i$ rounds the poset elements $\{\sigma^{-1}(1),\sigma^{-1}(2),\ldots,\sigma^{-1}(i)\}$ have dually ordered labels. In particular, the sorting procedure transforms each labeling of the poset into a dual linear extension.
\begin{figure}[ht]
\begin{center}
\raisebox{-0.5\height}{\includegraphics[width=3.5cm]{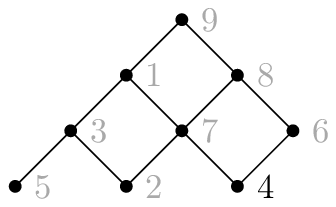}}
$\leadsto$
\raisebox{-0.5\height}{\includegraphics[width=3.5cm]{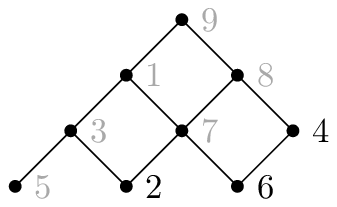}}
$\leadsto$
\raisebox{-0.5\height}{\includegraphics[width=3.5cm]{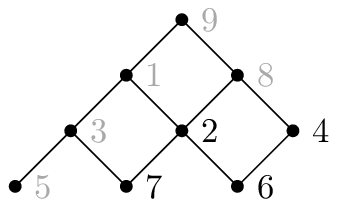}} \\
$\leadsto$
\raisebox{-0.5\height}{\includegraphics[width=3.5cm]{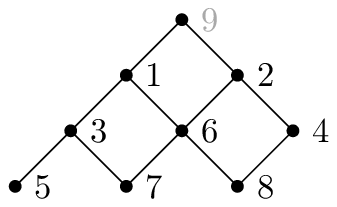}}
$\leadsto$
\raisebox{-0.5\height}{\includegraphics[width=3.5cm]{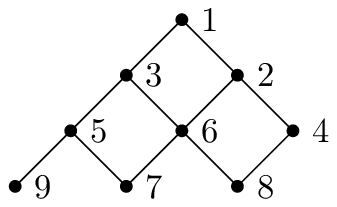}}
\caption{Example of jeu de taquin in order $\sigma$ as given in Figure \ref{fig:poset-labeling}.\label{fig:jdt-alg}}
\end{center}
\end{figure}
The question we are interested in is: Given a uniformly random labeling of the poset
elements, does jeu de taquin output a uniformly random dual linear extension of the poset? More specifically, given a poset, is there an order $\sigma$ such that playing jeu de taquin with all possible labelings yields each dual linear extension equally often? If yes, then jeu de taquin allows us to immediately extend each algorithm for creating uniformly random permutations to an algorithm creating uniformly random linear extensions of the poset.

\subsection{(Shifted) standard Young tableaux \& the hook-length formula }
For certain classes of posets we know the answer: Most famously, the Young diagram of an integer partition $\la = (\la_1,\la_2,\ldots,\la_k)\vdash n$ can be considered as a poset (see Figure \ref{fig:young-diagram-poset}).
\begin{figure}[ht]
\begin{center}
\includegraphics[width=8cm]{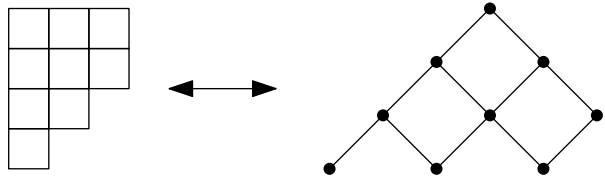}
\caption{Young diagram of the partition $(3,3,2,1)$ and the corresponding
poset.\label{fig:young-diagram-poset}}
\end{center}
\end{figure}
A Young tableau is a bijective filling of the boxes with $[n]$ and thus corresponds to a labeling of the poset. A standard Young tableau is a filling of the boxes where entries in each row (left-to-right) and column (top-down) are strictly increasing. Hence, standard Young tableaux correspond to the dual linear extensions of the respective poset. Novelli, Pak and Stoyanovskii gave a bijective proof \cite{NPSHookBijection} of the fact that jeu de taquin with column-wise order $\sigma$ (as in Figure \ref{fig:poset-labeling}) yields uniform distribution among standard Young tableaux. Their bijective proof can actually be extended to work for orders different from column-wise order (see also \cite{SaganSymmetricGroup}). 

A second class of posets where we know the answer corresponds to shifted Young diagrams of strict integer partitions, \ie partitions where $\la_1 > \la_2 > \dots > \la_k$. In this case, the boxes of the shifted Young diagram are indented (see Figure \ref{fig:shifted-syt-poset}).
\begin{figure}[ht]
\begin{center}
\includegraphics[width=8cm]{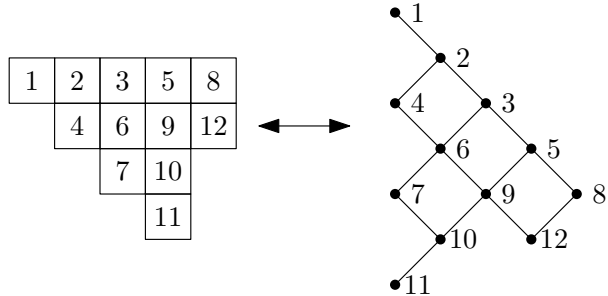}
\caption{A shifted standard Young tableau of shape $(5,4,2,1)$ and the dual linear extension of the corresponding poset.\label{fig:shifted-syt-poset}}
\end{center}
\end{figure}
It was shown by Fischer \cite{FischerShiftedHookBijection} that row-wise order $\sigma$ yields uniform distribution among shifted standard Young tableaux (however column-wise order fails for the partition $(4,3,2,1)$). 

What both classes have in common is that the number of different standard fillings of fixed shape $\la$ can be obtained with a simple product formula, called hook-length formula: In the case of Young diagrams the hook of a cell consists of all cells to the right in the same row, all cells below in the same column and the cell itself. The hook-length $h_c$ of a cell $c$ is the number of cells in its hook (see Figure \ref{fig:hook-lengths}).
\begin{figure}[ht]
\centering
\subfigure[]{%
\includegraphics[height=2.5cm]{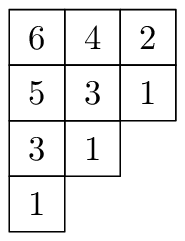}
\label{fig:hook-lengths}}
\quad
\subfigure[]{%
\includegraphics[height=2.5cm]{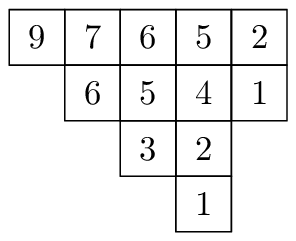}
\label{fig:shifted-hook-lengths}}
\caption{The hook-lengths of all cells in a Young diagram and a shifted Young diagram.}
\label{fig:hook-length-overview}
\end{figure}
The number $f^\la$ of standard Young tableaux of fixed shape $\la$ is then given by (\cite{FRTHookFormula}, \cite{GansnerShiftedHookFormula})
\begin{equation}
\label{eq:FRTHookFormula}
f^\la = \frac{n!}{\prod\limits_{c \in \la} h_c},
\end{equation}  
where the product is taken over all cells $c$ in the Young diagram. For shifted
Young diagrams the number of standard fillings can be obtained with the same hook-formula \eqref{eq:FRTHookFormula} by modifying the definition of the hooks: the shifted hook of a cell contains the same cells as the hook before, and additionally, if the hook contains the left-most cell in row $i$, then the shifted hook is extended to all cells in row $i+1$ (see Figure \ref{fig:shifted-hook-lengths}).

\subsection{d-complete posets}
The definition of the hook-lengths can be generalized so that the hook-length formula extends to further classes of posets, called d-complete posets \cite{ProctorClassification}.

For $m,n \geq 2$ the poset $D_{m,n}$
consists of $m+n$ elements for which the Hasse diagram is obtained by taking a diamond of four elements and appending a chain of $m-2$ elements at
the top element of the diamond and a chain of $n-2$ elements at the bottom
element of the diamond (see Figure \ref{fig:dmn-poset}). The poset $D_{m,n}$ is referred to as double-tailed diamond.
\begin{figure}[ht]
\begin{center}
\includegraphics[height=5cm]{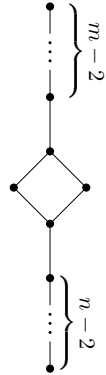}
\caption{Hasse diagram of the double-tailed diamond $D_{m,n}$.\label{fig:dmn-poset}}
\end{center}
\end{figure}
As elementary building blocks the double-tailed diamonds play a fundamental role in the definition of d-complete posets \cite{ProctorClassification}: Given a poset $P$ and $k \geq 3$, an interval $[w,z]$ in the poset is called $d_k$-interval if $[w,z] \cong D_{k-1,k-1}$. An interval $[w,y]$ is called $d_k^-$-interval if $[w,y] \cong D_{k-2,k-1}$ (in the special case $k=3$ let us abuse notation and say that a $d_3^-$-interval is a diamond with top element removed). A poset $P$ is called $d_k$-complete if it satisfies the following three conditions:
\begin{enumerate}
  \item $[w,y]$ is $d_k^-$-interval $\Rightarrow$ $\exists z \in P: [w,z]$ is $d_k$-interval,
  \item $[w,z]$ is $d_k$-interval $\Rightarrow$ $z$ does not cover an element outside of $[w,z]$ and
  \item $[w,z]$ is $d_k$-interval $\Rightarrow$ there exists no $w' \neq w$ such that $[w',z]$ is $d_k$-interval.
\end{enumerate} 
A poset $P$ is called d-complete if and only if $P$ is $d_k$-complete for all $k\geq 3$. The posets corresponding to Young diagrams and shifted Young diagrams are examples of d-complete posets (see Figure \ref{fig:dcomplete-examples}).
\begin{figure}[ht]
\begin{center}
\includegraphics[height=4cm]{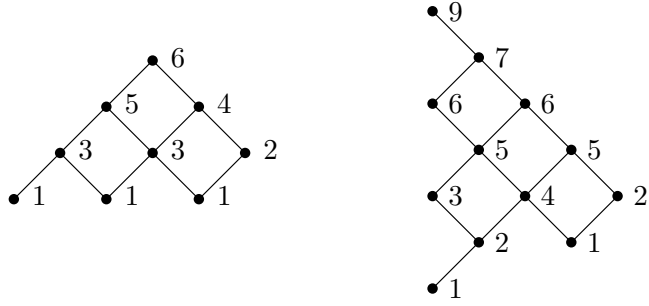}
\caption{Two d-complete posets with assigned hook-lengths.\label{fig:dcomplete-examples}}
\end{center}
\end{figure}
A full classification of d-complete posets can be found in \cite{ProctorClassification}.

Recall that for any poset $P$ a map $\sigma: P \to \mathbb{N}$ is called $P$-partition of $n$ if $\sigma$ is order-reversing and satisfies $\sum_{x\in P} \sigma(x)=n$. Let $G_P(x)$ denote the  corresponding generating function, \ie
\[
G_P(x):=\sum_{n \geq 0} a_n x^n,
\]
where $a_n$ denotes the number of $P$-partitions of $n$. As a result by R.P.~Stanley \cite{StanleyEnumComb1}*{Theorem 3.15.7} the generating function can be factorized into
\begin{equation}
\label{eq:StanleyPGenFun}
G_P(x)=\frac{W_P(x)}{(1-x)(1-x^2)\dots(1-x^{\abs{P}})}
\end{equation}
with a polynomial $W_P(x)$ such that $W_P(1)$ is the number of linear extensions of $P$. A poset $P$ is called hook-length poset if there exists a map $h:P \to \mathbb{Z}^+$ such that
\begin{equation}
\label{eq:HookLengthFactorisation}
G_P(x)=\prod_{z \in P} \frac{1}{1-x^{h(z)}}.
\end{equation}
The number $f^P$ of linear extensions of a hook-length poset can be obtained by equating \eqref{eq:StanleyPGenFun} and \eqref{eq:HookLengthFactorisation}, and taking the limit $x \to 1$:
\begin{equation}
\label{eq:posetHLF}
f^P = \frac{\abs{P}!}{\prod_{z \in P} h(z)}.
\end{equation}

Every d-complete poset is a hook-length poset \cite{ProctorWebsite}. In fact, d-complete posets were generalized to so-called leaf posets \cite{IshikawaTagawaLeafPosets}, which are also hook-length posets. The hook-lengths $h_z:=h(z)$ for d-complete posets can be obtained in the following way:  
\begin{enumerate}
  \item Assign all minimal elements of the poset the hook-length $1$.
  \item Repeat until all elements have their hook-length assigned: Choose a poset element $z$ where all smaller elements have their hook-length assigned. Check whether $z$ is the top element of a $d_k$-interval $[w,z]$. 
  \begin{itemize}
    \item If no, set $h_z := \#\{ y \in P : y \leq z\}$.
    \item If yes, set $h_z := h_l+h_r-h_w$, where $l$ and $r$ are the two incomparable elements of the double-tailed diamond $[w,z]$.
  \end{itemize}       
\end{enumerate}
By definition of d-complete posets the procedure is well-defined (there exists at most one $d_k$-interval with $z$ as top element). Moreover, it is a nice exercise to check that this definition is equivalent to the previous definition of hook-lengths for Young diagrams (which only contain $D_{2,2}$ intervals) and shifted Young diagrams (which additionally contain $D_{3,3}$ intervals along the left rim). As an example compare Figure \ref{fig:hook-length-overview} and Figure \ref{fig:dcomplete-examples}.

\subsection{Jeu de taquin on the double-tailed diamond}
Since there is exactly one pair of incomparable elements in the double-tailed diamond $D_{m,n}$, there are two different dual linear extensions $T_1$ and $T_2$ of $D_{m,n}$ (see Figure \ref{fig:dmn-poset-dual-linear-extension}).
\begin{figure}[ht]
\begin{center}
\includegraphics[height=6cm]{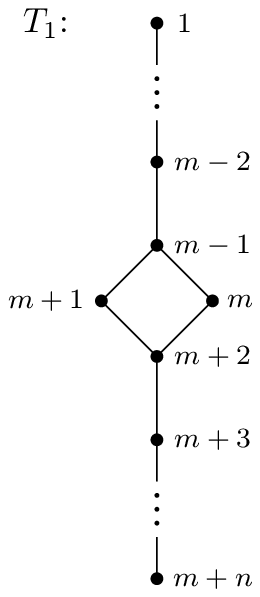}
\qquad\qquad
\includegraphics[height=6cm]{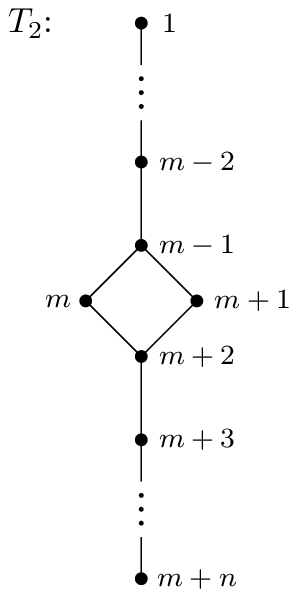}
\caption{The two possible dual linear extension of $D_{m,n}$.\label{fig:dmn-poset-dual-linear-extension}}
\end{center}
\end{figure}
For jeu de taquin we choose w.l.o.g. the order $\sigma$ that corresponds to the reverse order of $T_1$. In Section \ref{sec:JDTonDmn} we show that jeu de taquin yields uniform distribution if and only if $m \geq n$. We proceed by defining a related statistic on permutations generalising right-to-left minima. In terms of this statistic we can analyze a refined counting problem, namely counting the number of permutations for which jeu de taquin swaps the order between the labels of the two incomparable elements exactly $k$ times. As it turns out this counting problem has a nice closed solution (Proposition \ref{prop:RLmin}) as well as the resulting difference between the number of permutations yielding $T_1$ and $T_2$:
\begin{theorem}
\label{th:mainTheorem}
Let $s_{m,n}^{(1)}$ (resp. $s_{m,n}^{(2)}$) denote the number of permutations in
$\Sn_{m+n}$ which jeu de taquin on $D_{m,n}$ with order $\sigma$ maps to $T_1$
(resp.
$T_2$). Then
\begin{equation}
s_{m,n}^{(1)} - s_{m,n}^{(2)} = (-1)^m\binom{n-1}{m} m!\; n!, \quad m,n\geq 2.
\end{equation}
In particular, $s_{m,n}^{(1)} = s_{m,n}^{(2)}$ if and only if $m \geq n$.    
\end{theorem}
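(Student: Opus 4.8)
The plan is to reduce the output of jeu de taquin to a single statistic on the initial labeling. Because jeu de taquin always produces a dual linear extension and $D_{m,n}$ admits only $T_1$ and $T_2$, which differ solely in the relative order of the labels on the two incomparable elements $L$ and $R$, the output is $T_1$ exactly when $L$ finishes with a smaller label than $R$. So first I would trace the algorithm with the chosen order $\sigma$. Processing the bottom tail and then $B$ turns $\{B,d_1,\dots,d_{n-2}\}$ into a sorted reservoir; processing $R$ and then $L$ deposits the two smallest of the $n+1$ labels originally sitting on $\{B,L,R,d_1,\dots,d_{n-2}\}$ onto $R$ and $L$, so that after round $n+1$ the element $L$ holds the smaller label precisely when $f(L)$ is the minimum of these $n+1$ values. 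From round $n+2$ on (processing $T$ and then climbing $u_1,\dots,u_{m-2}$) a label can cascade down through $T$ into whichever of $L,R$ currently holds the smaller label; the key local fact I would prove is that such a cascade reverses the order of $L$ and $R$ if and only if the descending label exceeds both current diamond labels, and otherwise leaves the order intact.

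\textbf{The swap statistic and its distribution.}
With this in hand I would define $k(\pi)$ to be the number of times the $L$--$R$ order is reversed during the whole run and show that $\pi$ is mapped to $T_1$ exactly when $k(\pi)$ is even. Writing $c_k=\#\{\pi\in\Sn_{m+n}:k(\pi)=k\}$ this gives
\[
s_{m,n}^{(1)}-s_{m,n}^{(2)}=\sum_{k\ge 0}(-1)^k c_k.
\]
The refined count of the $c_k$ is the content of Proposition~\ref{prop:RLmin}, and I would obtain it from an insertion picture: after fixing the $n$ labels on $\{R,B,d_1,\dots,d_{n-2}\}$ (contributing a factor $n!$) one inserts $L$, then $T$, then $u_1,\dots,u_{m-2}$ one at a time, and the reversal criterion from the first step says that the $i$-th insertion flips the $L$--$R$ order in exactly $n$ of its $n+i$ admissible slots and preserves it in the remaining $i$. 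This is the promised generalization of right-to-left minima, and it yields the factorization
\[
\sum_{\pi\in\Sn_{m+n}}x^{k(\pi)}=n!\prod_{i=1}^{m}(i+n\,x),
\]
whose coefficients are expressible through the Stirling numbers $\stirlingone{\cdot}{\cdot}$.

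\textbf{Evaluation and the equivalence.}
The theorem is then just the evaluation at $x=-1$. Indeed
\[
\sum_{k\ge 0}(-1)^k c_k=n!\prod_{i=1}^{m}(i-n)=(-1)^m\,n!\,\frac{(n-1)!}{(n-1-m)!}=(-1)^m\binom{n-1}{m}m!\,n!,
\]
using $\prod_{i=1}^m (i-n)=(-1)^m (n-1)(n-2)\cdots(n-m)$. (As a sanity check, $x=1$ returns $n!\prod_{i=1}^m(i+n)=(m+n)!$, the total number of labelings.) For the concluding equivalence I only need that $\binom{n-1}{m}=0$ exactly when $m\ge n$, while $\binom{n-1}{m}m!\,n!$ is a positive integer when $m<n$; hence $s_{m,n}^{(1)}=s_{m,n}^{(2)}$ if and only if $m\ge n$.

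\textbf{Main obstacle.}
The hard part will be the first step. Verifying that a cascade reverses the $L$--$R$ order precisely under the stated condition requires controlling the interaction of the descending label with the bottom reservoir (for instance, that the reservoir minimum stays above the larger diamond label between cascades), and then packaging all reversals across the top tail into one statistic whose distribution factors as above. Once the reversal criterion and the resulting product $n!\prod_{i=1}^m(i+n\,x)$ are established, the remaining steps are routine bookkeeping.
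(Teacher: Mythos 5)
Your proposal is correct, and its first and last steps coincide with the paper's: the same reduction to the parity of the number of $L$--$R$ reversals (resting on the invariant that before $\pi_i$ is moved the boxes $B_{1,i+1},\dots,B_{1,m},B_{2,m-1}$ hold the $m+1-i$ smallest elements of $\{\pi_{i+1},\dots,\pi_{m+n}\}$, so that $\pi_i$ flips the order if and only if it is not a $\RLk{m+1-i}$ -- this is exactly Remark \ref{rem:parity}), and the same final evaluation, which in the paper appears as the falling-factorial identity $(n)_{m+1}=\sum_k(-1)^{m+1-k}\stirlingone{m+1}{k}n^k$. Where you genuinely diverge is the middle step. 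The paper establishes the distribution of the reversal statistic (Proposition \ref{prop:RLmin}, $c_{m,n,k}=n^k\stirlingone{m+1}{k+1}n!$) by induction on $m$: it matches the recurrence $\stirlingone{s+1}{t}=s\stirlingone{s}{t}+\stirlingone{s}{t-1}$ with a bijection that deletes the entry $1$ and splits according to whether its position is $\leq m$ or $>m$, handling the edge cases $k=0$ and $k=m$ separately. You instead derive the full generating function $\sum_{\pi}x^{k(\pi)}=n!\prod_{i=1}^m(i+n\,x)$ in one stroke by an insertion argument: the $i$-th inserted entry has $n+i$ admissible ranks in the current suffix, of which exactly $n$ cause a flip, and these rank choices are independent and jointly determine the permutation. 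This is a cleaner route -- the Stirling numbers appear automatically as coefficients of a rising factorial rather than as the target of an induction, the edge cases disappear, and the evaluations at $x=-1$ and $x=1$ are immediate. The part you defer as the main obstacle (the cascade criterion and the reservoir invariant) is precisely the straightforward case analysis the paper carries out at the start of Section \ref{sec:JDTonDmn}, so nothing essential is missing from your outline.
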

What is interesting about the result is that the poset $D_{m,n}$ is d-complete
if and only if $m \geq n$. Together with Young diagrams and shifted Young
diagrams (two further classes of d-complete posets) this hints towards a
connection between d-completeness of a poset and the property that jeu de
taquin \wrt an appropriate order yields uniform distribution.

In Section \ref{sec:BijectiveProofOfMT} we give a purely combinatorial proof of
Theorem \ref{th:mainTheorem} by constructing an appropriate involution
$\Phi_{m,n}$ on $\Sn_{m+n}$ if $m \geq n$. In the case $m < n$ we identify a set
$\mathcal{E}$ of of $\binom{n-1}{m} m!\; n!$ exceptional permutations and construct an appropriate involution $\Phi_{m,n}$ on $\Sn_{m+n}
\setminus \mathcal{E}$.

\subsection{Jeu de taquin on insets}
The class of insets (fourth class in the classification of d-complete posets in
\cite{ProctorClassification}) can be defined in terms of the shape of its
corresponding diagram. For $k \geq 2$ and $\la = (\la_1,\ldots,\la_k)\vdash n$
the inset $P_{k,\la}$ is obtained by taking the Young diagram corresponding to
$\la$ and adding $k-1$ boxes at the left end of the first row and one box at
the left end of the second row (see Figure \ref{fig:p4-3,2,2,1}).
\begin{figure}[ht]
\begin{center}
\includegraphics[height=3cm]{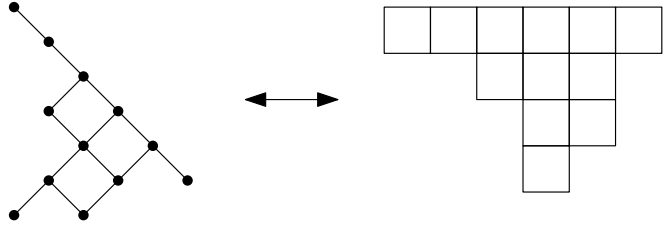}
\caption{The inset $P_{4,(3,2,2,1)}$ and its corresponding box diagram.\label{fig:p4-3,2,2,1}}
\end{center}
\end{figure}
The hook-lengths of the cells in $\la$ can be computed like for Young diagrams.
The additional box in the second row is not the maximum of a double-tailed
diamond interval, whereas each of the $k-1$ additional boxes in the first row is the top element of a double-tailed diamond interval. The resulting hook-lengths are
depicted in Figure \ref{fig:inset-hooklengths}.
\begin{figure}[ht]
\begin{center}
\includegraphics[height=5cm]{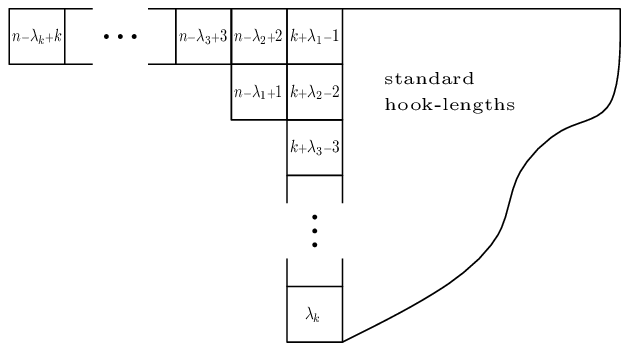}
\caption{The hook-lengths of $P_{k,\la}$ with $\la=(\la_1,\ldots,\la_k)\vdash n$.\label{fig:inset-hooklengths}}
\end{center}
\end{figure}
The hook-length formula \eqref{eq:posetHLF} implies that the number $f^{k,\la}$
of standard fillings is given by
\begin{equation}
\label{eq:HLFInsets}
f^{k,\la} = \frac{(n+k)!}{\left( \prod\limits_{c\in\la} h_c \right) \left( \prod\limits_{i=1}^k (n - \la_i + i) \right)}.
\end{equation}
Computational experiments indicate that jeu de taquin on $P_{k,\la}$ with
row-wise order again yields uniform distribution. Even though we were so far not
able to modify the techniques of \cite{NPSHookBijection} and
\cite{FischerShiftedHookBijection} to prove that jeu de taquin indeed yields
uniform distribution, a quick analysis of insets yields a solution to a different, nice problem:

Fix an integer partition $\la=(\la_1,\ldots,\la_k)$ and consider uniform
distribution on the set of standard Young tableaux of shape $\la$. What is the
expected value of the left-most entry in the second row? Three examples are depicted in Figure \ref{fig:expectation-examples}.
\begin{figure}[ht]
\begin{center}
\includegraphics[height=3cm]{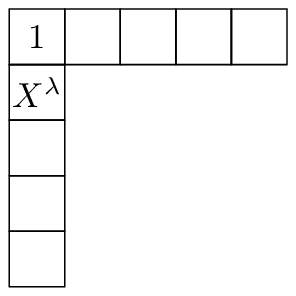}
\qquad
\includegraphics[height=3cm]{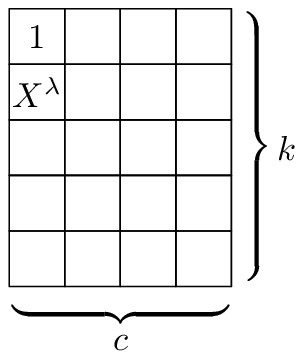}
\qquad
\includegraphics[height=3cm]{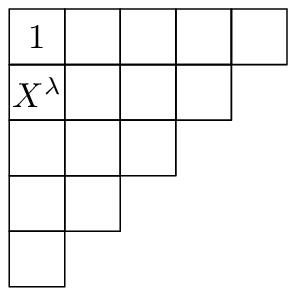}
\caption{What is $\mathbb{E}X^\la$ under uniform distribution among standard Young tableaux?\label{fig:expectation-examples}}
\end{center}
\end{figure}
In Section \ref{sec:CorollaryExpectedValue} we prove the answer given in
Corollary \ref{cor:mainCorollary}.
\begin{corollary}
\label{cor:mainCorollary}
Fix a partition $\la=(\la_1,\ldots,\la_k) \vdash n$. Let $(\Omega,2^\Omega,P)$ be the probability space containing the $f^\lambda$ different standard Young tableaux of shape $\la$ and uniform probability measure $P$. Let $X^\la \in \{2,3,\ldots,\la_1+1\}$ denote the random variable measuring the left-most entry in the second row. Then  
\begin{equation}
\label{eq:expectProduct}
\mathbb{E}X^\la = \frac{f^{k,\la}}{f^\la} = \prod_{i=1}^{k} \frac{n+i}{n+i-\la_{i}}.
\end{equation}
\end{corollary}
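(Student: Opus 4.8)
The plan is to reduce the corollary to the single enumerative identity
\begin{equation}
\label{eq:keyIdentity}
\sum_{T \in \operatorname{SYT}(\la)} X^\la(T) = f^{k,\la},
\end{equation}
where $X^\la(T)$ denotes the entry in the left-most cell of the second row of the standard Young tableau $T$. Once \eqref{eq:keyIdentity} is in hand, the first equality in \eqref{eq:expectProduct} is immediate, since under the uniform measure $\mathbb{E}X^\la = \frac{1}{f^\la}\sum_{T} X^\la(T) = f^{k,\la}/f^\la$. The second equality is then a routine substitution of the two hook-length formulas: dividing \eqref{eq:HLFInsets} by \eqref{eq:FRTHookFormula} cancels $\prod_{c\in\la} h_c$ and leaves
\[
\frac{f^{k,\la}}{f^\la} = \frac{(n+k)!/n!}{\prod_{i=1}^k (n-\la_i+i)} = \prod_{i=1}^{k} \frac{n+i}{n+i-\la_i},
\]
using $(n+k)!/n! = \prod_{i=1}^k (n+i)$. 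So the whole content of the corollary lies in \eqref{eq:keyIdentity}, which I would prove in two bijective steps.

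First I would peel off the inert cells of the inset. Let $\la^+$ be the shape obtained from $\la$ by adjoining a single cell immediately to the left of the first cell of the second row; it has $n+1$ cells, and its extra cell is a minimal element lying below the original cell $(2,1)$. I claim that every standard filling of $P_{k,\la}$ assigns the labels $1,2,\ldots,k-1$ to the $k-1$ cells adjoined to the first row, read left to right. Indeed, those cells form a chain whose top is the right-most adjoined cell $(1,0)$, and every cell of the inset outside this chain lies strictly above $(1,0)$; hence after deleting any initial segment of the chain, the left-most surviving chain cell is again the unique minimal element of the remaining poset. Peeling these off in turn forces the first $k-1$ labels, so a standard filling of $P_{k,\la}$ is exactly the same datum as a standard filling of $\la^+$ by $\{k,\ldots,n+k\}$. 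Therefore $f^{k,\la}=f^{\la^+}$.

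Second I would match standard fillings of $\la^+$ with the pairs counted on the left-hand side of \eqref{eq:keyIdentity}. Given $T\in\operatorname{SYT}(\la)$ and an integer $m$ with $1\le m\le X^\la(T)$, build a filling $U$ of $\la^+$ by placing $m$ in the new cell and replacing every entry $T(c)\ge m$ by $T(c)+1$. This uses each label in $\{1,\ldots,n+1\}$ exactly once; all order relations inside $\la$ are preserved by the monotone relabeling, and the single new relation $(2,0)<(2,1)$ holds because the entry at the original left-most second-row cell becomes $X^\la(T)+1>m$ precisely when $m\le X^\la(T)$. The inverse reads off $m:=U(2,0)$, deletes the new cell, and lowers every entry exceeding $m$ by one; the constraint $U(2,0)<U(2,1)$ guarantees that the recovered $m$ satisfies $m\le X^\la(T)$. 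This is a bijection, so $f^{\la^+}=\sum_{T}X^\la(T)$, and combined with $f^{k,\la}=f^{\la^+}$ this yields \eqref{eq:keyIdentity}.

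The main obstacle is conceptual rather than computational: one must recognize that the $k-1$ adjoined first-row cells of the inset are inert (forced to carry $1,\ldots,k-1$), so that the inset secretly counts fillings of the augmented shape $\la^+$, and then see that the single extra second-row cell behaves exactly like the choice of a cut point $m\le X^\la(T)$. Making the bound $m\le X^\la(T)$ tight in both directions of the insertion bijection is the one place requiring genuine care; everything else is bookkeeping together with the two hook-length formulas already recorded in the paper.
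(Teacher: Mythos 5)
Your proof is correct and follows essentially the same route as the paper: both arguments peel off the $k-1$ forced entries in the first row of the inset and identify the remaining fillings, refined by the entry in the extra second-row cell, with standard Young tableaux of shape $\la$ refined by $X^\la$, then divide the two hook-length formulas. The only cosmetic difference is that you package the correspondence as one global bijection proving $f^{k,\la}=\sum_{T}X^\la(T)$, whereas the paper fixes that extra entry and sums $\mathbb{P}\{X^\la\geq i\}$ over $i$ -- the same identity read two ways.
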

Take for example the partition $(3,3,2,1)$ from Figure \ref{fig:hook-lengths}
which has according to the hook-length formula $\frac{9!}{6\cdot 5 \cdot 4
\cdot 3 \cdot 3 \cdot 2}=168$ different standard Young tableaux. The corresponding inset has $f^{4,(3,3,2,1)}=429$ standard fillings. Hence, \eqref{eq:expectProduct} tells us that in a standard Young tableau of shape $(3,3,2,1)$ the left-most entry in the second row is on average $\frac{429}{168}\approx 2.554.$

The expected value could also be expressed as a sum of determinants by Aitken's determinant formula for skew standard Young tableaux \citelist{\cite{AitkenDeterminantFormula} \cite{StanleyEnumComb2}*{Corollary 7.16.3}}. While it should be possible to derive the same product formula \eqref{eq:expectProduct} from this expression, the simplicity of the combinatorial argument given in Section \ref{sec:CorollaryExpectedValue} is somewhat appealing.

A different approach for generating uniformly random linear extensions was taken by Nakada
and Okamura \citelist{\cite{NakadaOkamuraLinExtAlgSimplyLaced} \cite{NakadaLinExtAlgNonSimplyLaced}}. They use a probabilistic algorithm for
generating linear extensions and compute the probability $p(L)$ that a fixed
linear extension $L$ is generated by the algorithm. Since they show that $p(L)$
actually does not depend on $L$ their statement not only implies that the
algorithm yields uniform distribution among linear extensions but also that the number of linear extensions is given by $\frac{1}{p}$.

\section{Jeu de taquin on the double-tailed diamond}
\label{sec:JDTonDmn}

\subsection{Reducing the problem to understanding a permutation statistic}
For the purpose of this section let us visualize the elements of the
double-tailed diamond $D_{m,n}$ as boxes and labelings as fillings of the
boxes. Let $B_{i,j}$ denote the box in row $i$ and column $j$ and given a
filling of the boxes let $T_{i,j}$ denote the entry in box $B_{i,j}$ (see Figure
\ref{fig:d65-box-coordinates}).
\begin{figure}[ht]
\begin{center}
\includegraphics[height=3cm]{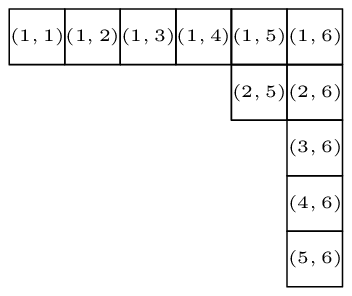}
\quad
\includegraphics[height=3cm]{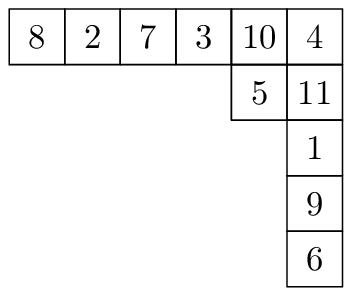}
\caption{Coordinates of the boxes in $D_{6,5}$
and a filling.\label{fig:d65-box-coordinates}}
\end{center}
\end{figure}
We perform modified jeu de taquin on $D_{m,n}$ with respect to the linear
extension $\sigma$ satisfying $\sigma(B_{2,m-1}) = n$ and $\sigma(B_{1,m})=n+1$.
Given a permutation $\pi = \pi_1\pi_2\dots\pi_{m+n}\in \mathcal{S}_{m+n}$ we start jeu de taquin by assigning $(\pi_1$, $\pi_2$, $\ldots$, $\pi_{m+n})$ to the boxes in reverse order of $\sigma$ (see Figure \ref{fig:d65-initial-filling}).
\begin{figure}[ht]
\begin{center}
\includegraphics[height=3cm]{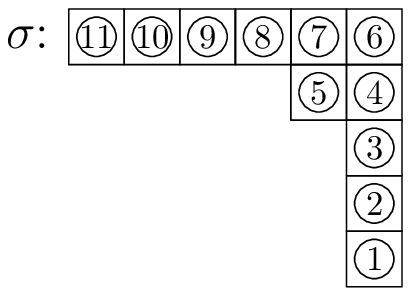}
\quad
\includegraphics[height=3cm]{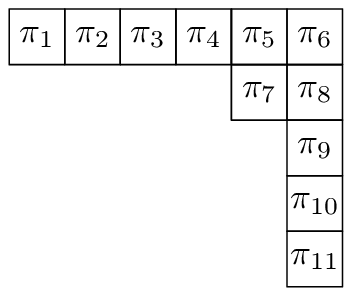}
\caption{Linear extension $\sigma$ for jeu de taquin and initial filling of the boxes.\label{fig:d65-initial-filling}}
\end{center}
\end{figure}

Let $x_i:=x_i(\pi)$ (resp. $y_i:=y_i(\pi)$) denote the entry $T_{1,m}$ (resp. $T_{2,m-1}$) after $i$ rounds of jeu de taquin. So, the initial values are $x_0=\pi_m$ and $y_0=\pi_{m+1}$ and we know that in the end we have $\{x_{m+n},y_{m+n}\}=\{m,m+1\}$. As in Theorem \ref{th:mainTheorem} we denote by $s_{m,n}^{(1)}$ the number of permutations $\pi \in \mathcal{S}_{m+n}$ with $x_{m+n}(\pi)=m$ and by $s_{m,n}^{(2)}$ the number of permutations with $x_{m+n}(\pi)=m+1$.

In the first $n$ rounds of jeu de taquin the elements $\{\pi_{m+n},\pi_{m+n-1},
\ldots, \pi_{m+1}\}$ are simply sorted in increasing order (cf. Insertion-Sort
algorithm). Therefore
\[
x_n(\pi) = \pi_m \quad\text{and}\quad y_n(\pi) =
\min\{\pi_{m+1},\pi_{m+2},\ldots,\pi_{m+n}\}.
\]
In the following we are no longer interested in the exact values of $x_i$ and $y_i$ but only whether $x_i < y_i$ or $x_i > y_i$:
If $x_n < y_n$, then $x_n < T_{2,m}$, so nothing happens in the $(n+1)$-st round
of jeu de taquin and $x_{n+1} < y_{n+1}$. If on the other hand $x_n > y_n$, then
$T_{1,m}$ may or may not be swapped with $T_{2,m}$ (and further entries), but
in any case $x_{n+1} > y_{n+1}$. Therefore, $x_{n+1}(\pi) < y_{n+1}(\pi)$ if and
only if $\pi_m=\min\{\pi_m,\pi_{m+1},\ldots,\pi_{m+n}\}$, \ie if $\pi_m$ is a right-to-left minimum of $\pi$.

For the remaining rounds we observe the following: Before moving $\pi_i$ at the start of round $m+n+1-i$ the boxes $B_{1,i+1},\ldots,B_{1,m},B_{2,m-1}$ contain the $m+1-i$ smallest elements of $\{\pi_{i+1},\pi_{i+2},\ldots,\pi_{m+n}\}$. We have to distinguish between two cases, namely whether $\pi_i$ is among the $m+1-i$ smallest elements of $\{\pi_{i},\pi_{i+1},\ldots,\pi_{m+n}\}$ or not.

If on the one hand $\pi_i > \max\{x_{m+n-i},y_{m+n-i}\}$, then
jeu de taquin first moves $\pi_i$ to $B_{1,m-1}$ and then swaps $\pi_i$ with $\min\{x_{m+n-i},y_{m+n-i}\}$, which -- by assumption -- changes  the order between $T_{1,m}$ and $T_{2,m-1}$. After that $\pi_i$ may or may not move further, but in any case the order between $x_{m+n-i+1}$ and $y_{m+n-i+1}$ is exactly the opposite of the order between $x_{m+n-i}$ and $y_{m+n-i}$. If on the other hand $\pi_i < \max\{x_{m+n-i},y_{m+n-i}\}$, then jeu de taquin
moves $\pi_i$ at most to $B_{1,m}$ or $B_{2,m-1}$ and -- by assumption -- does
not change the order. Thus $x_{m+n-i+1}$ and $y_{m+n-i+1}$ are in the same order as $x_{m+n-i}$ and $y_{m+n-i}$.

Summed up, we have observed that $x_{n+1}(\pi) < y_{n+1}(\pi)$ if and only if $\pi_m$ is a right-to-left minimum. After that, the order between $T_{1,m}$ and $T_{2,m-1}$ is kept the same in the round starting with $\pi_i$ if and only if $\pi_i$ is among the $m+1-i$ smallest elements of
$\{\pi_i,\pi_{i+1},\ldots,\pi_{m+n}\}$. Therefore, we have reduced the problem to understanding a corresponding statistic on permutations.

\subsection{Definition and analysis of a statistic on permutations}

The previous observations motivate the following definition generalizing right-to-left-minima of permutations: 
\begin{definition}[$\RLk{k}$] Let $\pi=\pi_1 \pi_2\dots\pi_n \in \mathcal{S}_n$. We say that $\pi_i$ is a $\RLk{k}$ if and only if $\pi_i$ is among the $k$ smallest elements of $\{\pi_i,\pi_{i+1},\ldots,\pi_n\}$.
\end{definition}
To solve our counting problem, we need to understand the distribution of
\begin{equation}
c_{m,n}(\pi):=\sum_{i=1}^m \left[\pi_i \text{ is } \RLk{m+1-i} \right], \quad \pi=\pi_1\cdots\pi_{m+n} \in \Sn_{m+n},
\end{equation}
where the square brackets denote Iverson brackets, \ie $[\phi]:=1$ if $\phi$ is
true, and $0$ otherwise. As it turns out the distribution of $c_{m,n}$ can be simply expressed:
\begin{proposition}
\label{prop:RLmin}
Let
\[
c_{m,n,k}:=\abs{\{\pi \in \Sn_{m+n}: c_{m,n}(\pi) = m-k\}}, \quad m,n \geq 1,\; 0 \leq k \leq m.
\]
Then
\begin{equation}
c_{m,n,k} = n^k \stirlingone{m+1}{k+1} n!,
\end{equation}
where $\stirlingone{s}{t}$ denotes the unsigned Stirling numbers of first kind, \ie the number of permutations of $s$ elements with $t$ disjoint cycles. 
\end{proposition}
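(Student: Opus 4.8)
The plan is to reformulate the statistic $c_{m,n}$ in terms of independent coordinates via the classical inversion-code bijection, after which the entire count factors. For $\pi = \pi_1\cdots\pi_{m+n} \in \Sn_{m+n}$ and each position $i$, let $r_i = r_i(\pi)$ be the rank of $\pi_i$ among the suffix $\{\pi_i,\pi_{i+1},\ldots,\pi_{m+n}\}$, with the convention that $r_i = 1$ means $\pi_i$ is the minimum of that suffix; then $r_i \in \{1,2,\ldots,m+n+1-i\}$. The map $\pi \mapsto (r_1,\ldots,r_{m+n})$ is the well-known bijection (essentially the Lehmer code) between $\Sn_{m+n}$ and the product set $\prod_{i=1}^{m+n}\{1,\ldots,m+n+1-i\}$; in particular the coordinates $r_i$ vary independently over their respective ranges. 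Directly from the definition, $\pi_i$ is a $\RLk{m+1-i}$ precisely when $r_i \le m+1-i$, so that
\[
c_{m,n}(\pi) = \#\{\,1 \le i \le m : r_i \le m+1-i\,\}.
\]

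With this reformulation the count becomes a product. First I would observe that the coordinates $r_{m+1},\ldots,r_{m+n}$ are unconstrained and contribute a free factor $n\cdot(n-1)\cdots 1 = n!$, which accounts for the factorial in the claimed formula. For each remaining position $i \in \{1,\ldots,m\}$ the range $\{1,\ldots,m+n+1-i\}$ splits into exactly $m+1-i$ \emph{success} values (those with $r_i \le m+1-i$) and exactly $n$ \emph{failure} values (those with $r_i > m+1-i$); note that the failure count is $n$ \emph{independently of} $i$, which is the source of the factor $n^k$. Imposing $c_{m,n}(\pi)=m-k$ means selecting exactly $k$ of these positions to fail. Summing over all such selections and reindexing by $j=m+1-i$ yields
\[
c_{m,n,k} = n!\cdot n^{k}\!\!\sum_{\substack{S\subseteq\{1,\ldots,m\}\\ \abs{S}=m-k}}\;\prod_{j\in S} j \;=\; n!\cdot n^{k}\cdot e_{m-k}(1,2,\ldots,m),
\]
where $e_{m-k}$ denotes the elementary symmetric polynomial of degree $m-k$ and $S$ ranges over the sets of success values.

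It remains to identify the elementary symmetric polynomial with a Stirling number. Here I would use the rising-factorial expansion $x(x+1)\cdots(x+m) = \sum_{t}\stirlingone{m+1}{t}x^{t}$; dividing by $x$ gives $\prod_{j=1}^{m}(x+j) = \sum_{k=0}^{m}\stirlingone{m+1}{k+1}x^{k}$. Comparing the coefficient of $x^{k}$ with the expansion $\prod_{j=1}^{m}(x+j)=\sum_{d=0}^{m}e_{d}(1,\ldots,m)\,x^{m-d}$ gives $e_{m-k}(1,2,\ldots,m)=\stirlingone{m+1}{k+1}$, and substituting this into the previous display produces exactly $c_{m,n,k}=n^{k}\stirlingone{m+1}{k+1}\,n!$. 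The only genuine step is the opening reformulation: once the independence of the coordinates $r_i$ is established, the remainder is a routine product count together with a textbook symmetric-function identity. The point most in need of care is the bookkeeping of the split into $m+1-i$ successes versus $n$ failures at each of the first $m$ positions, since it is precisely the $i$-independence of the failure count that makes the answer collapse so cleanly.
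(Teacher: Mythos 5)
Your proof is correct, and it takes a genuinely different route from the paper. The paper's proof treats the edge cases $k=0$ and $k=m$ by hand and then proceeds by induction on $m$: deleting the entry $1$ from $\pi$ (distinguishing whether its position is at most $m$ or not) yields the recurrence $c_{m,n,k}=m\,c_{m-1,n,k}+n\,c_{m-1,n,k-1}$, which matches the recurrence $\stirlingone{m+1}{k+1}=m\stirlingone{m}{k+1}+\stirlingone{m}{k}$ for the Stirling numbers. You instead pass to the suffix-rank (Lehmer) code, observe that the event ``$\pi_i$ is a $\RLk{m+1-i}$'' becomes the coordinatewise condition $r_i\leq m+1-i$ with exactly $n$ failure values at every position $i\leq m$, and obtain the whole distribution in closed form as $n!\,n^k e_{m-k}(1,\ldots,m)$; the identification $e_{m-k}(1,\ldots,m)=\stirlingone{m+1}{k+1}$ then follows from the rising-factorial expansion. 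All steps check out: the code is a bijection onto $\prod_i\{1,\ldots,m+n+1-i\}$, the rank condition is exactly the $\RLk{m+1-i}$ condition, and the symmetric-function identity is standard (I verified the final formula against $\Sn_2$ and $\Sn_3$). What your approach buys is a non-inductive, one-stroke derivation of the full distribution that makes the independence structure transparent and explains \emph{why} the factor $n^k$ appears (the failure count is $i$-independent); what the paper's approach buys is a bijective recurrence that stays entirely within permutations and mirrors the Stirling recursion combinatorially, at the cost of separate edge-case arguments and an induction.
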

\begin{remark}
\label{rem:parity}
From the previous observations it follows that $c_{m,n,k}$ counts the number of $\pi \in \Sn_{m+n}$ for which the order between $T_{1,m}$ and $T_{2,m-1}$ in jeu de taquin is changed exactly $k$ times (with $\pi_m$ contributing to $k$ if and
 only if $\pi_m$ is not a right-to-left minimum). In particular, $x_{m+n}(\pi) < y_{m+n}(\pi)$ if and only if $k=m - c_{m,n}(\pi)$ is even.
\end{remark}
\begin{proof}[Proof of Proposition \ref{prop:RLmin}]
The proof is split into the two edge cases $k=m$ and $k=0$ and the case $0 < k < m$.
\begin{itemize}
   \item $k=0$: Let us show that
   \[
  c_{m,n}(\pi) = m \quad\iff\quad \{\pi_1,\ldots,\pi_m\}=\{1,\ldots,m\}.
  \]
  Assume $s(\pi_i):=\left[\pi_i \text{ is } \RLk{m+1-i} \right]=1$ for all
  $i=1,\ldots,m$. Then $s(\pi_m)=1$ implies $1 \in \{\pi_1,\ldots,\pi_m\}$.
  Suppose there exists $2 \leq k \leq m$ such that $k \in
  \{\pi_{m+1},\ldots,\pi_{m+n}\}$. It then follows from $s(\pi_m) = s(\pi_{m-1})
  = \dots = s(\pi_{m+2-k})=1$ that none of $\pi_{m},\pi_{m-1},\ldots,\pi_{m+2-k}$ can be greater than $k$, \ie
  $\{\pi_{m+2-k},\ldots,\pi_{m}\}=\{1,2,\ldots,k-1\}$. But this contradicts $\pi_{m+1-k}$ being a $\RLk{k}$. The reverse direction is obvious. Since there
  are exactly $m!$ permutations in $\mathcal{S}_{m+1}$ consisting of one cycle, we obtain
  \[
  c_{m,n,0} = m!\; n! = n^0 \stirlingone{m+1}{1} n!.
  \]
  \item $k=m$: In this case let us observe that
  \[
  c_{m,n}(\pi) = 0 \quad\iff\quad j \in \{\pi_{m+2-j},\pi_{m+3-j},\ldots,\pi_{m+n}\}\quad \text{for all }  j=1,\ldots,m.
  \]
  Suppose $j \notin \{\pi_{m+2-j},\pi_{m+3-j},\ldots,\pi_{m+n}\}$. Then there
  exists an $i \in \{1,\ldots,m+1-j\}$ such that $\pi_i = j \leq m+1-i$. But this
  implies that $\pi_i$ is a $\RLk{m+1-i}$, so $c_{m,n}(\pi) > 0$. If we conversely
  assume that $\{1,2,\ldots,m+1-i\} \subseteq \{\pi_{i+1},\ldots,\pi_{m+n}\}$
  for all $i=1,\ldots,m$, it follows that $\pi_i$ is not among the $m+1-i$ smallest elements of $\{\pi_i,\ldots,\pi_{m+n}\}$, and therefore $c_{m,n}(\pi)=0$.
  
  Therefore we have exactly $n^m$ possibilities to choose the preimage of
  $\{1,2,\ldots,m\}$ and for each such choice the preimages of $\{m+1,\ldots,m+n\}$ can be chosen in any order, \ie
  \[
  c_{m,n,m} = n^m n! = n^m \stirlingone{m+1}{m+1} n!.
  \]  
  \item $0 < k < m$:  We proceed by induction on $m$. From the recurrence relation $\stirlingone{s+1}{t}=s\stirlingone{s}{t}+\stirlingone{s}{t-1}$  and the induction hypothesis (resp. the edge cases) it follows that
  \[
  n^k \stirlingone{m+1}{k+1}n! = n^k m \stirlingone{m}{k+1}n!+n^k\stirlingone{m}{k}n! = m\; c_{m-1,n,k}+n\; c_{m-1,n,k-1}.
  \]
  So it only remains to show that
  \begin{equation}
  \label{eq:cmnk-recursion}
  m \; c_{m-1,n,k}+ n\; c_{m-1,n,k-1} = c_{m,n,k}.
  \end{equation}
  For a bijective proof of \eqref{eq:cmnk-recursion} consider the position of
  $1$ in $\pi_1\dots\pi_m\pi_{m+1}\dots\pi_{m+n}$. Let $\pi'$ denote the
  permutation $\pi$ with $\pi_j=1$ removed and each number reduced by $1$, so
  that $\pi' \in \mathcal{S}_{m+n-1}$.
  
  If on the one hand $1\leq j \leq m$, note
  that $c_{m-1,n}(\pi')=c_{m,n}(\pi)-1$ and thus $c_{m-1,n}(\pi')=(m-1)-k$ if and only if
  $c_{m,n}(\pi)=m-k$. For each $1 \leq j \leq m$ this establishes a one-to-one
  correspondence between the permutations $\pi\in\mathcal{S}_{m+n}$ with
  $\pi_j=1$ that are counted by $c_{m,n,k}$ and permutations
  $\pi'\in\mathcal{S}_{m+n-1}$ counted by $c_{m-1,n,k}$.
  
  If on the other hand $m+1 \leq j \leq m+n$, note that $c_{m-1,n}(\pi')=c_{m,n}(\pi)$ and
  therefore $c_{m-1,n}(\pi')=(m-1)-(k-1)$ if and only if $c_{m,n}(\pi)=m-k$. For each $m+1
  \leq j \leq m+n$ this is a one-to-one correspondence between the permutations $\pi\in\mathcal{S}_{m+n}$ with
  $\pi_j=1$ that are counted by $c_{m,n,k}$ and permutations
  $\pi'\in\mathcal{S}_{m+n-1}$ counted by $c_{m-1,n,k-1}$.
\end{itemize}
\end{proof}

\begin{proof}[Proof of Theorem \ref{th:mainTheorem}]
In Remark \ref{rem:parity} we have observed that
\[
s_{m,n}^{(1)} - s_{m,n}^{(2)} = \abs{\{\pi \in \Sn_{m+n}: m - c_{m,n}(\pi) \text{
is even}\}}-\abs{\{\pi \in \Sn_{m+n}: m - c_{m,n}(\pi) \text{ is odd}\}}.
\]
Together with Proposition \ref{prop:RLmin} it follows that
\begin{align*}
s_{m,n}^{(1)} - s_{m,n}^{(2)} &= \sum_{k=0}^m (-1)^k c_{m,n,k} = \sum_{k=1}^{m+1} (-1)^{k-1} n^{k-1} \stirlingone{m+1}{k} n! \\
&= (-1)^m (n-1)! \sum_{k=0}^{m+1} (-1)^{m+1-k} \stirlingone{m+1}{k} n^k. 
\end{align*}
Since
\[
(x)_s := x(x-1) \dots (x-s+1) = \sum_{k=0}^s (-1)^{s-k} \stirlingone{s}{k} x^k
\]
we obtain
\[
s_{m,n}^{(1)} - s_{m,n}^{(2)} = (-1)^m (n-1)! (n)_{m+1} = (-1)^m\binom{n-1}{m} m!\; n!.
\qedhere
\]
\end{proof}
So, in particular jeu de taquin yields uniform distribution on the double-tailed diamond $D_{m,n}$ if and only if $m \geq n$. 

Let us close this section by noting that the if-direction can also be obtained
by a simple inductive argument, which can be extended to general posets: If we
play jeu de taquin on $D_{m,m}$ with all permutations where $\pi_1$ has a fixed
value and stop the sorting procedure before $\pi_1$ is moved, then we obtain a
(non-uniform) distribution $(\alpha,\beta)$ with $\alpha+\beta=(2m-1)!$ and
$\alpha$, $\beta$ independent from $\pi_1$. As previously observed $\pi_1$ does
not change the order between $T_{1,m}$ and $T_{2,m-1}$ if and only if $\pi_1$ is
a $\RLk{m}$, \ie $\pi_1 \in \{1,2,\ldots,m\}$. After completing jeu de taquin
by moving $\pi_1$, we therefore obtain the distribution $(\alpha,\beta)$ if $\pi_1
\in\{1,2,\ldots,m\}$ and the distribution $(\beta,\alpha)$ if $\pi_1 \in
\{m+1,m+2,\ldots,2m\}$. In total each of the two standard fillings occurs $m(\alpha+\beta)$ times, \ie jeu de taquin yields a uniform
distribution on $D_{m,m}$. In the same way we can now fix $\pi_1$ in jeu de
taquin on $D_{m,n}$ with $m > n$. Inductively we obtain a uniform distribution
on $D_{m-1,n}$ for each fixed $\pi_1 \in \{1,2,\ldots,m+n\}$. Since each
fixed $\pi_1$ either always or never changes the order of the entries in the
incomparable boxes, we also obtain a uniform distribution on $D_{m,n}$.

Given an $n$-element poset $P$ and an order $\sigma$ such that jeu de taquin
yields uniform distribution, we can extend this property to the poset $P'$
obtained by adding a maximum element $m$ to $P$: First it is clear that the
total number of (dual) linear extensions remains the same, \ie $f^P = f^{P'}$.
As order for jeu de taquin choose $\sigma'|_P:=\sigma|_P$ and $\sigma'(m):=n+1$. 
Now consider all labelings $\pi$ of $P'$ where $\pi_m=i$ is fixed. If we play
jeu de taquin with all such labelings and stop before moving $i$ we obtain
(restricted to $P$) a uniform distribution among the $f^P$ different dual linear
extensions (with entries $[n+1]\setminus\{i\}$). Note that in each dual
linear extension of $P'$ the label $i$ has a unique reverse path back to the
top. Thus, moving $i$ in the last step of jeu de taquin preserves the uniform
distribution. Having a uniform distribution for each $\pi_m=i\in[n+1]$ implies uniform distribution in total.

Since jeu de taquin with row-wise order on Young tableaux yields a uniform
distribution \cite{NPSHookBijection}, it follows from the previous
observation that the poset obtained from removing the top row of $P_{k,\la}$ has
the same property. It remains an
open problem to understand why the uniform distribution is also preserved when adding the top row.

\section{A combinatorial proof of Theorem \ref{th:mainTheorem}}
\label{sec:BijectiveProofOfMT}

In this section we give a bijective proof of Theorem \ref{th:mainTheorem}. For
this purpose we define the type $\tau$ for each permutation $\pi \in \Sn_{m+n}$
by setting $\tau(\pi):=1$ if jeu de taquin with input permutation $\pi$ yields
the output tableau $T_1$, and $\tau(\pi)=-1$ if the output
tableau is $T_2$ (see Figure
\ref{fig:dmn-poset-dual-linear-extension} and Figure
\ref{fig:d65-initial-filling}).
Given two subsets $S_1,S_2 \subseteq \Sn_{m+n}$ we say that $f:S_1 \to S_2$ is
type-inverting if $\tau(\pi)=-\tau(f(\pi))$ for all $\pi \in S_1$. To give a
combinatorial proof of Theorem \ref{th:mainTheorem} we define a type-inverting involution
$\Phi_{m,n}:\Sn_{m+n}\to\Sn_{m+n}$ for all $m \geq n$. In the case $m<n$ we identify a set $\mathcal{E}$ of $\binom{n-1}{m} m!\; n!$ exceptional permutations in $\mathcal{S}_{m+n}$ of the same type. On the remaining set $\Sn_{m+n} \setminus \mathcal{E}$ we then define a type-inverting involution $\Phi_{m,n}$.

As in Section \ref{sec:JDTonDmn} let $x_i(\pi)$ and $y_i(\pi)$ denote the entries
$T_{1,m}$ and $T_{2,m-1}$ after $i$ rounds of jeu de taquin. Before giving the formal definition of $\Phi_{m,n}$ let us first state the basic ideas:

First, if the last $i$ entries of $\pi$ are in the same relative order as the last $i$ entries of $\pi'$, then $x_i(\pi) < y_i(\pi)$ if and only if $x_i(\pi') < y_i(\pi')$. This means that whether or not $x_i(\pi) < y_i(\pi)$ for $i=n+1,\ldots,n+m$ depends on the relative order of $\pi_{m+n+1-i}$, $\pi_{m+n+2-i}$, $\ldots$, $\pi_{m+n}$ but not the absolute values.

Second, we have noted in Section \ref{sec:JDTonDmn} that $\pi_i$ does not change the order
between $T_{1,m}$ and $T_{2,m-1}$ if and only if $\pi_i$ is among the $m+1-i$ smallest elements of $\{\pi_i,\pi_{i+1},\ldots,\pi_{m+n}\}$. This implies that whether or not $\pi_i$ changes the order only depends on the set of elements $\{\pi_{i+1},\pi_{i+2},\ldots,\pi_{m+n}\}$, but not their relative order. In particular, $\pi_1$ changes the order between $T_{1,m}$ and $T_{2,m-1}$ if and only if $\pi_1 > m$.

In the case $m=n$ we therefore construct an involution $\Phi_{n,n}$ on
$\Sn_{2n}$ such that the relative order of all entries in $\pi$ and the relative order of all entries in
$\pi':=\Phi_{n,n}(\pi)$ is the same if we exclude $\pi_1$ and $\pi_1'=2n+1-\pi_1$. In the case $m>n$ we let
$\Phi_{m,n}$ fix the first $m-n$ entries of each permutation and apply the
type-inverting involution $\Phi_{n,n}$ to the bottom $2n$ entries. If $m<n$ we
apply $\Phi_{m,m}$ to the smallest $2m$ entries if $\pi_1 \leq 2m$.
Else, we apply $\Phi_{m-1,m-1}$ to the smallest $2(m-1)$ entries if $\pi_2 \leq
2(m-1)$, and so on. Either one of the first $m$ entries is small enough to
apply the type-inverting involution $\Phi_{m+1-i,m+1-i}$ or $\pi_1$, $\pi_2$, $\ldots$, $\pi_m$ are all too large. In the latter case we call the permutation exceptional and exclude it from the involution $\Phi_{m,n}$. As it turns out there are exactly $\binom{n-1}{m}\;m!\;n!$ exceptional permutations all having the same type, thus proving Theorem \ref{th:mainTheorem}.

Let us now formally define the involution $\Phi_{m,n}$ in all three cases, prove the correctness and give examples. 

\subsection{Case $m=n$}
For $n\in\mathbb{N}$ and $1 \leq t \leq 2n$ define the permutation $\chi_{n,t}\in\Sn_{2n}$ by
\begin{equation}
1 \leq t \leq n: \quad\quad\quad
\chi_{n,t}(i):=
\begin{cases}
   2n+1-t & \text{if } i=t, \\
   i-1       & \text{if } t < i \leq 2n+1-t, \\
   i       & \text{otherwise.} \\
\end{cases}
\end{equation}
\begin{equation}
n+1 \leq t \leq 2n: \quad
\chi_{n,t}(i):=
\begin{cases}
   2n+1-t & \text{if } i=t, \\
   i+1       & \text{if } 2n+1-t \leq i < t, \\
   i       & \text{otherwise.} \\
\end{cases}
\end{equation}
For all $1 \leq t \leq 2n$ we have 
\[
\chi_{n,t} \circ \chi_{n,2n+1-t} = \chi_{n,2n+1-t} \circ \chi_{n,t} = \id
\]
and $\chi_{n,t}\evalat{[2n]\setminus t}$ is order-preserving. The desired involution $\Phi_{n,n}:\Sn_{2n} \to \Sn_{2n}$ is
\begin{equation}
\Phi_{n,n}(\pi) := \chi_{n,\pi_1} \circ \pi.
\end{equation}
An example can be seen in Figure \ref{fig:phi44-involution}. 
\begin{figure}[ht]
\begin{center}
\includegraphics[height=2.5cm]{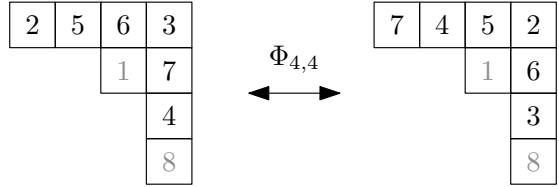}
\caption{The involution $\Phi_{4,4}$ applied to $\pi=25631748$.\label{fig:phi44-involution}}
\end{center}
\end{figure}
As composition of permutations it is clear that $\Phi_{n,n}(\pi) \in \Sn_{2n}$, and from $\chi_{n,\pi_1}(\pi_1)=2n+1-\pi_1$ it follows that
\[
\Phi_{n,n}^2(\pi) = \Phi_{n,n}(\chi_{n,\pi_1} \circ \pi) = \chi_{n,2n+1-\pi_1} \circ \chi_{n,\pi_1} \circ \pi = \pi, 
\]
\ie $\Phi_{n,n}$ is an involution. Since $\chi_{n,\pi_1}$ is order-preserving except for $\pi_1$ the entries of $\pi$ and $\pi':=\Phi_{n,n}(\pi)$ have the same relative order except for $\pi_1$ and $\pi'_1$. Therefore
\[
x_{2n-1}(\pi)<y_{2n-1}(\pi) \quad \iff \quad x_{2n-1}(\pi')<y_{2n-1}(\pi').
\]
As $\pi'_1 = 2n+1-\pi_1$ exactly one of $\pi_1 > n$ or $\pi'_1 > n$ holds, and
thus the two permutations $\pi$ and $\pi'$ are of different type, \ie
$\Phi_{n,n}$ is a type-inverting involution on $\mathcal{S}_{2n}$.

\subsection{Case $m > n$}
Given two subsets $A,B \subseteq \mathbb{N}$ with $\abs{A}=\abs{B}$, let $\sigma_{A,B}:A
\to B$ denote the unique order-preserving bijection between $A$ and $B$, \ie
the bijection satisfying $(a_1 < a_2) \rightarrow (\sigma_{A,B}(a_1) <
\sigma_{A,B}(a_2))$ for all $a_1,a_2\in A$. Obviously, we have $\sigma_{B,A} \circ
\sigma_{A,B} = \sigma_{A,B} \circ \sigma_{B,A} = \id$.

If $m > n$ and $\pi\in\Sn_{m+n}$ set
$A:=A^\pi:=\{\pi_{m-n+1},\pi_{m-n+2},\ldots,\pi_{m+n}\}$, $B:=\{1,2,\ldots,2n\}$
and $t:=t^{\pi}:=\sigma_{A,B}(\pi_{m-n+1})$. The type-inverting involution
$\Phi_{m,n}$ in this case is
\begin{equation}
\Phi_{m,n}(\pi):=
\begin{cases}
i \mapsto \pi_i & \text{if } 1\leq i \leq m-n, \\
i \mapsto \sigma_{B,A} \circ \chi_{n,t} \circ \sigma_{A,B}(\pi_i) & \text{if } m-n+1 \leq i \leq m+n.
\end{cases}
\end{equation}
Note that $\Phi_{m,n}(\pi)$ is well-defined and an element of $\Sn_{m+n}$ (see Figure \ref{fig:phi53-involution} for an example).
\begin{figure}[ht]
\begin{center}
\includegraphics[height=2cm]{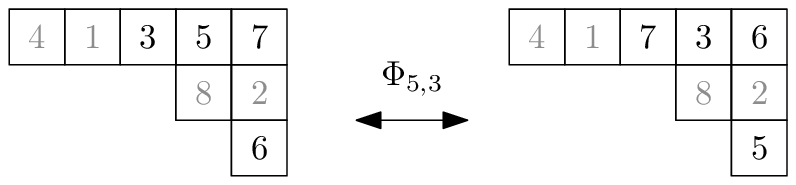}
\caption{The involution $\Phi_{5,3}$ applied to $\pi=41357826$ with $A^{\pi}=\{2,3,5,6,7,8\}$.\label{fig:phi53-involution}}
\end{center}
\end{figure}
Moreover we have 
\[
\begin{array}{ll}
\Phi_{m,n}(\pi) \evalat{\{1,\ldots,m-n\}} &= \pi, \\
\Phi_{m,n}(\pi) \evalat{\{m-n+1,\ldots,m+n\}} &= \sigma_{B,A} \circ \chi_{n,t} \circ \sigma_{A,B} \circ \pi.
\end{array}
\]
Therefore $A^{\Phi_{m,n}(\pi)}=A^\pi$ and $t^{\Phi_{m,n}(\pi)} = \sigma_{A,B}(\sigma_{B,A} \circ \chi_{n,t^{\pi}} \circ \sigma_{A,B} (\pi_{m-n+1}))= \chi_{n,t^{\pi}}(t^{\pi}) = 2n+1-t^{\pi}$. It follows that 
\[
\Phi_{m,n}^2(\pi)\evalat{\{1,\ldots,m-n\}} = \pi\evalat{\{1,\ldots,m-n\}}
\]
and
\begin{multline*}
\Phi_{m,n}^2(\pi)\evalat{\{m-n+1,\ldots,m+n\}} = \Phi_{m,n}(\sigma_{B,A} \circ \chi_{n,t^{\pi}} \circ \sigma_{A,B} \circ \pi)\evalat{\{m-n+1,\ldots,m+n\}} \\
= \sigma_{B,A} \circ \chi_{n,2n+1-t^{\pi}} \circ \sigma_{A,B} \circ \sigma_{B,A} \circ \chi_{n,t^\pi} \circ \sigma_{A,B} \circ \pi \evalat{\{m-n+1,\ldots,m+n\}} = \pi \evalat{\{m-n+1,\ldots,m+n\}},
\end{multline*}
\ie $\Phi_{m,n}^2 = \id$. As in the case $m=n$ the relative order of the last $2n-1$ entries of $\pi$ and $\pi':=\Phi_{m,n}(\pi)$ is the same. The entry $\pi_{m-n+1}$ is among the $n$ smallest elements of $\{\pi_{m-n+1},\ldots,\pi_{m+n}\}$ if and only if $\pi'_{m-n+1}$ is not among the $n$ smallest elements of $\{\pi'_{m-n+1},\ldots,\pi'_{m+n}\}$. Therefore 
\[
x_{2n}(\pi) < y_{2n}(\pi) \quad\iff\quad x_{2n}(\pi') > y_{2n}(\pi').
\]
Since $\pi_i = \pi'_i$ for all $i=1,\ldots,m-n$, the permutations $\pi$ and $\pi'$ are of different type.

\subsection{Case $m < n$}
In the case $m < n$ let us define a subset $\mathcal{E} \subseteq \Sn_{m+n}$ of exceptional permutations which we exclude from the involution: We say that $\pi=\pi_1 \dots \pi_{m+n}$ is exceptional if and only if $\pi_i > 2(m+1-i)$ for all $i=1,\ldots,m$. Note that the number of exceptional permutations is $(n-m)(n-m+1)\dots(n-1)\;n! = \binom{n-1}{m}\;m!\;n!$. Given $\pi \in \Sn_{m+n} \setminus \mathcal{E}$, let $k:=k^\pi \geq 1$ minimal such that $\pi_k \leq 2(m+1-k)$. Define
\begin{equation}
\Phi_{m,n}(\pi):=
\begin{cases}
i \mapsto \pi_i & \text{if } \pi_i > 2(m+1-k), \\
i \mapsto \chi_{m+1-k,\pi_k} (\pi_i) & \text{otherwise.}
\end{cases}
\end{equation}
An example can be seen in Figure \ref{fig:phi57-involution}.
\begin{figure}[ht]
\begin{center}
\includegraphics[height=4.5cm]{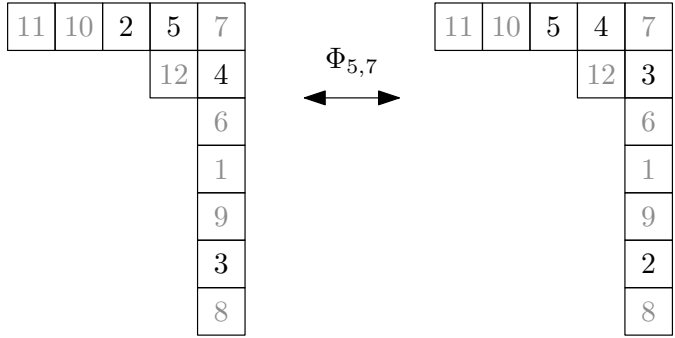}
\caption{The involution $\Phi_{5,7}$ with $k^{\pi}=3$.\label{fig:phi57-involution}}
\end{center}
\end{figure}
Note that $\Phi_{m,n}(\pi)$ is well-defined and since $\chi_{m+1-k,\pi_k} (\pi_k) = 2(m+1-k)+1-\pi_k$ we have $k^{\Phi_{m,n}(\pi)}=k^\pi$ and $\Phi_{m,n}(\pi) \in \Sn_{m+n} \setminus \mathcal{E}$. With $L:=\{1 \leq i \leq m+n : \pi_i > 2(m+1-k)\}$ it follows that
\[
\Phi_{m,n}^2(\pi) \evalat{L} = \Phi_{m,n}(\pi) \evalat{L} = \pi \evalat{L} 
\]
and
\begin{multline*}
\Phi_{m,n}^2(\pi) \evalat{[m+n] \setminus L} = \Phi_{m,n}(\chi_{m+1-k,\pi_k} \circ \pi) \evalat{[m+n] \setminus L} \\
= \chi_{m+1-k,2(m+1-k)+1-\pi_k} \circ \chi_{m+1-k,\pi_k} \circ \pi \evalat{[m+n] \setminus L} = \pi \evalat{[m+n] \setminus L},
\end{multline*}
\ie $\Phi_{m,n}$ is an involution. The relative order of the entries in
$\pi$ and $\pi':=\Phi_{m,n}(\pi)$ is the same except for $\pi_k$ and $\pi'_k$.
Since $\pi_k$ is among the $m+1-k$ smallest elements of
$\{\pi_k,\ldots,\pi_{m+n}\}$ if and only if  $\pi'_k$ is not among the $m+1-k$
smallest elements of $\{\pi'_k,\ldots,\pi'_{m+n}\}$ the involution $\Phi_{m,n}$
is type-inverting.
We can conclude the proof by noting that all exceptional permutations are of the
same type, since $\pi_i > 2(m+1-i)$ for all $i=1,\ldots,m$ implies that $\pi_i$
is not among the $m+1-i$ smallest elements of
$\{\pi_i,\pi_{i+1},\ldots,\pi_{m+n}\}$.

\section{Proof of Corollary \ref{cor:mainCorollary} \& Examples}
\label{sec:CorollaryExpectedValue}

The statement of Corollary \ref{cor:mainCorollary} can be observed by computing the number $f^{k,\la}$ of standard fillings of $P_{k,\la}$ in two different ways. On the one hand we can use the hook-length
formula \eqref{eq:HLFInsets} for insets. On the other hand we could refine the
counting \wrt the left-most entry in the second row: In each standard filling
of $P_{k,\la}$ the $k-1$ left-most entries in the first row are
$(T_{1,1},T_{1,2},\ldots,T_{1,k-1})=(1,2,\ldots,k-1)$. For $i=0,1,\ldots,\la_1$
let $f^{k,\la}_i$ denote the number of standard fillings of $P_{k,\la}$ where
the left-most entry in the second row is $T_{2,k-1}=k+i$ (see Figure
\ref{fig:insets-refined-counting}).
\begin{figure}[ht]
\begin{center}
\includegraphics[height=4.5cm]{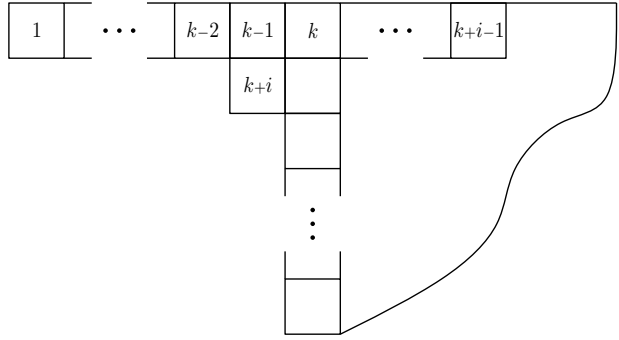}
\caption{Standard fillings counted by $f^{k,\la}_i$.\label{fig:insets-refined-counting}}
\end{center}
\end{figure}
Now note that for each fixed $i \in \{0,1,\ldots,\la_1\}$ the standard fillings
counted by $f^{k,\la}_i$ are in one-to-one correspondence with standard Young
tableaux of shape $\la$ where the left-most entry in the second row is at least
$i+1$ (by considering the entries in $\la$ and the order-preserving map).
Together with $f^{k,\la} = \sum_{i=0}^{\la_1} f^{k,\la}_i$,
\eqref{eq:FRTHookFormula} and \eqref{eq:HLFInsets} we obtain
\[
\mathbb{E}X^{\la} = \sum_{i=1}^{\la_1+1} \mathbb{P}\{X^{\la} \geq i\} = \sum_{i=0}^{\la_1} \frac{f^{k,\la}_i}{f^{\la}} = \frac{f^{k,\la}}{f^{\la}} = \prod_{i=1}^{k} \frac{n+i}{n+i-\la_{i}}.
\]
Let us apply this result to the three families of partitions in Figure \ref{fig:expectation-examples}.
\begin{example}
Consider the partition $\la = (k,1^{k-1}) \vdash 2k-1$. From Corollary \ref{cor:mainCorollary} we obtain
\[
\mathbb{E}X^\la = \prod_{i=1}^{k} \frac{2k-1+i}{2k-1+i-\la_{i}} = \frac{2k}{k}\prod_{i=2}^{k} \frac{2k-1+i}{2k-2+i}= 3-\frac{1}{k}.
\]
Of course, this could also be obtained by the elementary observation that $f^{\la}=\binom{2k-2}{k-1}$ and
\[
\mathbb{E}X^\la = \sum_{i \geq 1} \mathbb{P}(X^{\la} \geq i) = \frac{1}{\binom{2k-2}{k-1}} \left[\binom{2k-2}{k-1} + \sum_{i \geq 2} \binom{2k-i}{k-1} \right] = 1+\frac{\binom{2k-1}{k}}{\binom{2k-2}{k-1}} = 3-\frac{1}{k}.
\]
\end{example}
\begin{example}
Fix $c \geq 1$ and consider the partition $\la=(c,\ldots,c)\vdash k c$. For $k
\geq c$ it follows that
\[
\mathbb{E}X^\la = \prod_{i=1}^k \frac{kc+i}{kc+i-c} = \prod_{i=1}^{c}
\frac{k(c+1)+1-i}{kc+1-i} \xrightarrow{k \rightarrow \infty} \left(1 + \frac{1}{c} \right)^c.
\]
\end{example}
\begin{example}
Let $\la =(k,k-1,\ldots,1)\vdash \binom{k+1}{2}$ be of staircase shape. After a
short computation one obtains
\[
\mathbb{E}X^\la = \frac{\left(\binom{k+1}{2}+k\right)!!\left(\binom{k+1}{2}-k-1\right)!!}{\binom{k+1}{2}!},
\]
where $!!$ denotes the double factorial, \ie $(2n)!! = 2^n n!$ and $(2n-1)!! = \frac{(2n)!}{2^n n!}$. 
By applying Stirling's formula one can show that asymptotically $\mathbb{E}X^\la \sim e \approx 2.71828$. 
\end{example}

\bibliography{bib131107}
\bibliographystyle{alpha}

\end{document}